\newtheorem{theorem}{Theorem}
\newtheorem{lemma}[theorem]{Lemma}
\newtheorem*{remark}{Remark}
\icmltitlerunning{Learning to Optimize Multigrid PDE Solvers}
\begin{document}

\twocolumn[
\icmltitle{Learning to Optimize Multigrid PDE Solvers}



\icmlsetsymbol{equal}{*}

\begin{icmlauthorlist}
\icmlauthor{Daniel Greenfeld}{weiz}
\icmlauthor{Meirav Galun}{weiz}
\icmlauthor{Ron Kimmel}{tech}
\icmlauthor{Irad Yavneh}{tech}
\icmlauthor{Ronen Basri}{weiz}
\end{icmlauthorlist}

\icmlaffiliation{weiz}{Weizmann Institute of Science, Rehovot, Israel.}
\icmlaffiliation{tech}{Technion, Israel Institute of Technology, Haifa, Israel}

\icmlcorrespondingauthor{Daniel Greenfeld}{daniel.greenfeld@weizmann.ac.il}

\icmlkeywords{Machine Learning, ICML}

\vskip 0.3in
]


\printAffiliationsAndNotice{}  

\begin{abstract}
\label{abstract}
Constructing fast numerical solvers for partial differential equations (PDEs) is crucial for many scientific disciplines.  A leading technique for solving large-scale PDEs is using multigrid methods. At the core of a multigrid solver is the prolongation matrix, which relates between different scales of the problem. This matrix is strongly problem-dependent, and its optimal construction is critical to the efficiency of the solver. In practice, however, devising multigrid algorithms for new problems often poses formidable challenges. In this paper we propose a framework for learning multigrid solvers. Our method learns a (single) mapping from a family of parameterized PDEs to prolongation operators. We train a neural network once for the entire class of PDEs, using an efficient and unsupervised loss function.  Experiments on a broad class of 2D diffusion problems demonstrate improved convergence rates compared to the widely used Black-Box multigrid scheme, suggesting that our method successfully learned rules for constructing prolongation matrices.
\end{abstract}

\section{Introduction}
\label{Introduction}
Partial Differential Equations (PDEs) are a key tool for modeling diverse problems in science and engineering.
In all but very specific cases, the solution of PDEs requires carefully designed numerical discretization methods, by which the PDEs are approximated by algebraic systems of equations.
Practical settings often give rise to very large ill-conditioned problems, e.g., in predicting weather systems, oceanic flow, image and video processing, aircraft and auto design, electromagnetics, to name just a few.
Developing efficient solution methods for such large systems has therefore been an active research area since many decades ago.

Multigrid methods are leading techniques for solving large-scale discretized PDEs, as well as other large-scale problems (for textbooks see, e.g., \cite{BHM00,TOS01}).
Introduced about half a century ago as a method for fast numerical solution of scalar elliptic boundary-value problems, multigrid methods have since been developed and adapted to problems of increasing generality and applicability.
Despite their success, however, applying off-the-shelf multigrid algorithms to new problems is often non-optimal.
In particular, new problems often require expertly devised prolongation operators, which are critical to constructing efficient solvers.
This paper demonstrates that machine learning techniques can be utilized to derive suitable operators for wide classes of problems.

We introduce a framework for learning multigrid solvers, which we illustrate  by applying the framework to 2D diffusion equations.
At the heart of our method is a neural network that is trained to map discretized diffusion PDEs to prolongation operators, which in turn define the multigrid solver.
The proposed approach has three main attractive properties:\\
\textbf{Scope}.
We train a \textit{single} deep network \textit{once} to handle \textit{any} diffusion equation whose (spatially varying) coefficients are drawn from a given distribution.
Once our network is trained it can be used to produce solvers for any such equation.
Our goal in this paper, unlike existing paradigms, is not to learn to solve a given problem, but instead to learn compact \textit{rules} for constructing solvers for many different problems.\\
\textbf{Unsupervised training}.
The network is trained with no supervision.
It will not be exposed to ground truth operators, nor will it see numerical solutions to PDEs.
Instead, our training is guided by algebraic properties of the produced operators that allude to the quality of the resulting solver.\\
\textbf{Generalization}.
While our method is designed to work with problems of arbitrary size, it will suffice to train our system on quite small problems.
This will be possible due to the local nature of the rules for determining the prolongation operators. 
Specifically, we train our system on block periodic problem instances using a specialized block Fourier mode analysis to achieve efficient training.
At test time we generalize for size (train on $32 \times 32$ grid and test on a $1024 \times 1024$ grid), boundary conditions (train with periodic BCs and test with Dirichlet), and instance types (train on block periodic instances and test on general problem instances).
We compare our method to the widely used Black Box multigrid scheme~\cite{Den82} for selecting operator-dependent prolongation operators, demonstrating superior convergence rates under a variety of scenarios and settings.

\subsection{Previous efforts}

A number of recent papers utilized NN to numerically solve PDEs, some in the context of multigrid methods.
Starting with the classical paper of  \cite{Lagaris98}, many suggested 
 to design a network to solve specific PDEs \cite{ICLR19,Fua18,baymani2010artificial,Berg2018AUD,Han2017OvercomingTC,han2018solving,KDO17,Mishra18,Sirignano18,sun2003solving,Tang17,Wei2018GeneralSF}, generalizing for different choices of right hand sides, boundary conditions, and in some cases to different domain shapes.
These methods require separate training for each new equation.

Some notable approaches in this line of work include \cite{Tang17}, who learn to solve diffusion equations on a fixed grid with variable coefficients and sources drawn randomly in an interval.
A convolutional NN is utilized, and its depth must grow (and it needs to be retrained) with larger grid sizes.
\cite{ICLR19} proposes an elegant learning based approach to accelerate existing iterative solvers, including multigrid solvers. The method is designed for a specific PDE and is demonstrated with the Poisson equation with constant coefficients.
It is shown to generalize to domains which differ from the training domain. 
\cite{Berg2018AUD} handle complex domain geometries by penalizing the PDE residual on collocation points. \cite{han2018solving,Sirignano18} introduce efficient methods for solving specific systems in very high dimensions. \cite{Mishra18} aims to reduce the error of a standard numerical scheme over a very coarse grid. \cite{sun2003solving}  train a neural net to solve the Poisson equation over a surface mesh. 
\cite{Fua18} learn to simulate  computational fluid dynamics to predict the pressures and drag over a surface. \cite{Wei2018GeneralSF} apply deep reinforcement learning to solve specific PDE  instances. 
\cite{KDO17} use a linear NN to derive optimal restriction/prolongation operators for solving a single PDE instance with multigrid. 
The method is demonstrated on 1D PDEs with constant coefficients.
The tools suggested, however, do not offer ways to generalize those choices to other PDEs without re-training.

More remotely, several recent works, e.g., \cite{Chen2019NeuralODE,haber2018learning,chang2018multilevel} suggest an interpretation of neural networks as dynamic differential equations. 
Under this continuous representation, a multilevel strategy is employed to accelerate training in image classification tasks.

\section{Multigrid background and problem setting}
\label{problemDefinition}


We focus on the classical second-order elliptic diffusion equation in two dimensions,
\begin{equation}\label{diffusion}
-\nabla\cdot({\bf g}\nabla {\bf u}) = {\bf f},
\end{equation}
over a square domain, where ${\bf g}$ and ${\bf f}$ are given functions, and the unknown function  ${\bf u}$ obeys some prescribed boundary conditions, for example, Dirichlet boundary conditions whereby ${\bf u}$ is given at every point on the boundary.
The equation is discretized on a square grid of $n \times n$ grid cells with uniform mesh-size $h$.
The discrete diffusion coefficients $g$ are defined at cell centers, while the discrete solution vector  $u$ and the discrete right-hand side vector $f$ are located at the vertices of the grid, as illustrated in the   $3\times 3$ sub-grid depicted in Fig. \ref{fig:subgrid}.

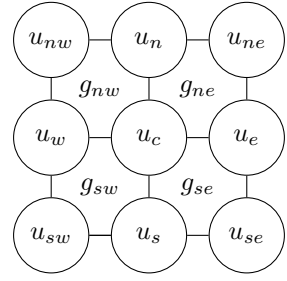
\begin{figure}
\centering
\begin{tikzpicture}[scale=0.65]
    \node[shape=circle,draw=black,minimum size=1cm] (left) at (-2,0) {$u_w$};
    \node[shape=circle,draw=black,minimum size=1cm] (right) at (2,0) {$u_e$};
    \node[shape=circle,draw=black,minimum size=1cm] (center) at (0,0) {$u_c$};
    \node[shape=circle,draw=black,minimum size=1cm] (down_left) at (-2,-2) {$u_{sw}$};
    \node[shape=circle,draw=black,minimum size=1cm] (down_right) at (2,-2) {$u_{se}$};
    \node[shape=circle,draw=black,minimum size=1cm] (down) at (0,-2) {$u_s$};
    \node[shape=circle,draw=black,minimum size=1cm] (top_left) at (-2,2) {$u_{nw}$};
    \node[shape=circle,draw=black,minimum size=1cm] (top_right) at (2,2) {$u_{ne}$};
    \node[shape=circle,draw=black,minimum size=1cm] (up) at (0,2) {$u_n$};
    \node[] (q) at (1,1) {$g_{ne}$};
    \node[] (q) at (-1,-1) {$g_{sw}$};
    \node[] (q) at (1,-1) {$g_{se}$};
    \node[] (q) at (-1,1) {$g_{nw}$};

    \path [-] (left) edge (center);
    \path [-] (left) edge (top_left);
    \path [-] (center) edge (right);
    \path [-] (top_right) edge (right);
    \path [-] (top_right) edge (up);
    \path [-] (down_right) edge (right);
    \path [-] (down_right) edge (down);
    \path [-] (center) edge (up);
    \path [-] (top_left) edge (up);
    \path [-] (down_left) edge (down);
    \path [-] (down_left) edge (left);
    \path [-] (center) edge (down);
    \path [-] (down_left) edge (down);

\end{tikzpicture}
\caption{{\small Sub-grid of $3 \times 3$. The discrete diffusion coefficients $g$ are defined at cell centers. 
The discrete solution $u$ and the discrete right hand side  $f$ are located at the vertices of the grid. 
The discrete equation for  $u_c$ has nine non-zero coefficients multiplying the unknowns  $u_c$ and its eight neighbors.  }} \label{fig:subgrid}
\end{figure}

Employing bilinear finite element discretization, we obtain the following equation associated with the variable $u_c$,
\begin{equation}
\begin{split}
&-\frac{1}{3h^2}(g_{nw}u_{nw}+g_{ne}u_{ne}+g_{se}u_{se}+g_{sw}u_{sw}) \\ &-\frac{1}{6h^2}\big((g_{nw}+g_{ne})u_{n}+(g_{ne}+g_{se})u_{e}+ \\ &(g_{se}+g_{sw})u_s+(g_{sw}+g_{nw})u_w \big)\\
&+\frac{2}{3h^2}(g_{nw}+g_{ne}+g_{se}+g_{sw})u_c = f_c \, .
\end{split}
\end{equation}
\newline
Arranging these equations in matrix-vector form, we obtain a linear system
\begin{equation}\label{discretization}
Au=f ,
\end{equation}
where $A_{c,j}$ is the coefficient multiplying $u_j$ in the discrete equation associated with $u_c$.
The term ``the stencil of $u_c$" will refer to the $ 3 \times 3$ set of coefficients associated with the equation for $ u_c $.

The discretization matrix $A$ is symmetric positive semi-definite (and strictly positive definite in the case of Dirichlet boundary conditions) and sparse, having at most nine non-zero elements per row, corresponding to the nine stencil elements. The size of $u$, i.e., the number of unknowns, is approximately $n^2$ (with slight variations depending on whether or not boundary values are eliminated), while the size of $A$ is approximately $n^2 \times n^2$.
For large $n$, these properties of $A$ render iterative methods attractive. One simple option is the classical Gauss-Seidel relaxation, which is induced by the splitting $A = L + U$, where $L$ is the lower triangular part of $A$, including the diagonal, and $U$ is the upper triangular part of $A$.
The resulting iterative scheme,
\begin{equation} \label{Gauss_Seidel}
u^{(k)} = u^{(k-1)} + L^{-1} \left(f-A u^{(k-1)} \right) \, ,
\end{equation}
is convergent for symmetric positive definite matrices.
Here, $(k)$ denotes the iteration number.
The error after iteration $k$, $e^{(k)} = u - u^{(k)}$, is related to the error before the iteration by the error propagation equation,
\begin{equation} \label{Gauss_Seidel_Error_Propagation}
e^{(k)} = S e^{(k-1)} \, ,
\end{equation}
where $S = I - L^{-1}A$ is the error propagation matrix of the Gauss-Seidel relaxation, with $I$ denoting the identity matrix of the same dimension as $A$.

Although the number of elements of $A$ is $O(n^4)$, Gauss-Seidel iteration requires only $O(n^2)$ arithmetic operations because $A$ is extremely sparse, containing only $O(n^2)$ nonzero elements. Nevertheless, as a stand-alone solver Gauss-Seidel is very inefficient for large $n$ because the matrix $A$ is highly ill-conditioned resulting in slow convergence.
However, Gauss-Seidel is known to be very efficient for \textit{smoothing} the error.
That is, after a few Gauss-Seidel iterations, commonly called relaxation sweeps, the remaining error varies slowly relative to the mesh-size, and it can therefore be approximated well on a coarser grid. This motivates the multigrid algorithm, which is described next.

\subsection{Multigrid Cycle}
\label{TwoGrid}

A coarse grid is defined by skipping every other mesh point in each coordinate, obtaining  a grid of $\frac{n}{2} \times \frac{n}{2}$ grid cells and mesh-size $2h$.
A prolongation operator $P$ is defined and it can be represented as a sparse matrix whose number of rows is equal to the size of $u$ and the number of columns is equal to the number of coarse-grid variables, approximately $\left(\frac{n}{2}\right)^2$. 
The two-grid version of the multigrid algorithm proceeds by applying one or more relaxation sweeps on the fine grid, e.g., Gauss-Seidel, obtaining an approximation $\tilde{u}$ to $u$, such that the remaining error, $u - {\tilde u}$ is smooth and can therefore be approximated well on the coarse grid. 
The linear system for the error is then projected to the coarse grid by the Galerkin method as follows.  
The coarse grid operator is defined as  $P^TAP$ and the right-hand-side is the restriction of the residual to the coarse grid, i.e., $P^T (f-A \tilde{u})$. 
Then, the coarse-grid system is solved directly in the two-grid algorithm, recursively in multigrid, and the resulting solution is transferred by the prolongation $P$ to the fine grid and added to the current approximation. 
This is typically followed by one or more additional fine-grid relaxation sweeps. 
This entire process comprises a single two-grid iteration as formally described in Algorithm \ref{alg:MG}.
\begin{algorithm}
   \caption{Two-Grid Cycle}
   \label{alg:MG}
\begin{algorithmic}[1]
   \STATE {\bfseries Input:} Discretization matrix $A$, initial approximation $u^{(0)}$, right-hand side $f$, prolongation matrix $P$,  a relaxation scheme, $k = 0$, residual tolerance $\delta$
   \REPEAT
    \STATE Perform $s_1$ relaxation sweeps starting with the current approximation $u^{(k)}$, obtaining ${\tilde u}^{(k)}$
    \STATE Compute the residual: $r^{(k)}=f-A {\tilde u}^{(k)}$
    \STATE Project the error equations to the coarse grid and solve the coarse grid system: $P^T A P v^{(k)} = P^T r^{(k)}$
    \STATE Prolongate and add the coarse grid solution: ${\tilde u}^{(k)}={\tilde u}^{(k)}+P v^{(k)}$
    \STATE Perform $s_2$ relaxation sweeps obtaining $u^{(k+1)}$
    \STATE $k = k+1$
    \UNTIL{$r^{(k-1)}<\delta$}
\end{algorithmic}
\end{algorithm}

In the multigrid version of the algorithm, Step 5 is replaced by one or more recursive calls to the two-grid algorithm, employing successively coarser grids.
A single recursive call yields the so-called multigrid V cycle, whereas two calls yield the W cycle. These recursive calls are repeated until reaching a
very coarse grid, where the problem is solved cheaply by
relaxation or an exact solve. The entire multigrid cycle thus
obtained has linear computational complexity. The W cycle
is somewhat more expensive than the V cycle but may
be cost-effective in particularly challenging problems.  
 
The error propagation equation of the two-grid algorithm is given by
\begin{equation} \label{Two_Grid_Error_Propagation}
e^{(k)} = M e^{(k-1)},
\end{equation}
where $M=M(A,P;S,s_1,s_2)$ is the two-grid error propagation matrix
\begin{equation}\label{eq:M}
M = S^{s_2} C  S^{s_1}.
\end{equation}
Here, $s_1$ and $s_2$ are the number of relaxation sweeps performed before and after the coarse-grid correction phase, and the error propagation matrix of the coarse grid correction is given by
\begin{equation}\label{eq:C}
C = (I-P \left[ P ^T A P \right]^{-1} P^T A).
\end{equation}
For a given operator $A$, the error propagation matrix $M$ defined in \eqref{eq:M} governs the convergence behavior of the  two-grid (and consequently multigrid) cycle. 
The cycle efficiency relies on the complementary roles of the relaxation $S$ and the coarse-grid correction $C$; that is, the error propagation matrix of the coarse grid correction phase, $C$, must reduce significantly any error which is not reduced by $S$, called \textit{algebraically smooth error}.

For symmetric positive definite $A$ and full-rank $P$, as we assume throughout this discussion, the matrix $P \left[ P ^T A P \right]^{-1} P^T A$ in \eqref{eq:C} is an $A$-orthogonal projection onto the range of $P$ (i.e., the subspace spanned by the columns of $P$). 
Thus, $C$, the error propagation matrix of the coarse grid correction phase \eqref{eq:C}, essentially subtracts off the component of the error that is in the range of $P$.   
This requires that the algebraically smooth error will approximately be in the range of $P$. 
The task of devising a good prolongation is challenging, because $P$ also needs to be very sparse for computational efficiency.

Commonly, a specific relaxation scheme, such as Gauss-Seidel, is preselected, as are the number of relaxation sweeps per cycle, and therefore the efficiency of the cycle is governed solely by the prolongation operator $P$. The challenging task therefore is to devise effective prolongation operators. A common practice for diffusion problems on structured grids is to impose on $P$ the sparsity pattern of bilinear interpolation\footnote{Assume that $u_c$ in the subgrid diagram  in Fig.~\ref{fig:subgrid} coincides with a coarse-grid point $U_c$. Then the column of $P$ corresponding to $U_c$ contains nonzero values only at the rows corresponding to the nine fine-grid variables appearing in the diagram.} and then to skillfully select values of the nonzero elements of $P$ based locally on the elements of the discretization matrix $A$. 
In contrast, our approach is to automatically learn the local rules for determining the prolongation coefficients by training a single neural network, which can be applied  to  the entire class of diffusion equations discretized by $3 \times 3$ stencils.

\section{Method}
\label{Method}

We propose a scheme for learning a mapping from discretization matrices to prolongation matrices. We assume that the diffusion coefficients are drawn from some distribution, yielding a distribution $\mathcal{D}$ over the discretization matrices. A natural objective  would be to seek a mapping that minimizes the expected spectral radius of the error propagation matrix $M(A,P)$ defined in $\eqref{eq:M}$, which governs the asymptotic convergence rate of the multigrid solver. Concretely, we represent the mapping with a neural network parameterized by $\theta$ that maps discretization matrices $A$ to prolongations $P_\theta(A) \in {\cal P}$ with a predefined sparsity pattern. The relaxation scheme $S$ is fixed to be Gauss-Seidel, and the parameters $s_1, s_2$ are set to 1. Thus, we arrive at the following learning problem: 
\begin{equation} \label{eq:loss} 
\min_{P_\theta \in \mathcal{P}} \mathbf{E}_{A\sim\mathcal{D}}
~~\rho(M(A,P_\theta(A))), 
\end{equation}
where $\rho(M)$ is the spectral radius of the matrix $M$, and $\mathcal{D}$ is the distribution over the discretization matrices $A$.

\begin{figure}
\begin{center}
\centerline{\includegraphics[width=0.4\columnwidth]{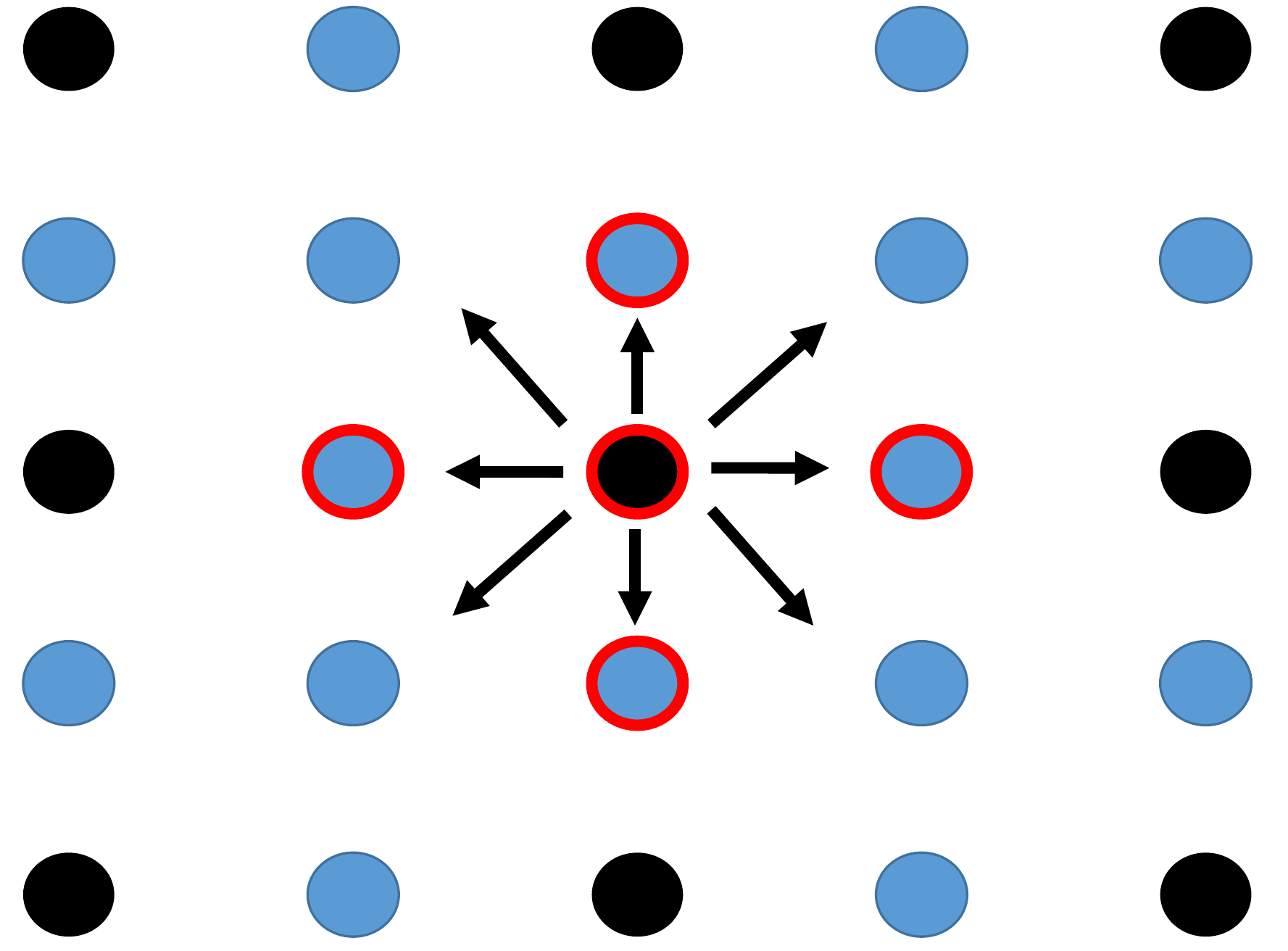}}
\caption{{\small The input and the output of  the network. The discs denote the (fine) grid points, where the black discs mark the subset of points selected as coarse grid points. The input of the network consists of the $ 3 \times 3$ stencils of the five points, denoted by the red cycles. The black arrows illustrate the output of the network, i.e., the  contribution of the prolongation of one coarse point to its eight fine grid neighbors.}}
\label{fig:NetInput}
\end{center}
\end{figure}

\subsection{Inferring $P$ from local information} \label{Inferring_P}

The network we construct receives an input vector of size 45, consisting of a local subset of the discretization matrix $A$, and produces an output that consists of 4 numbers, which in turn determine the 9 nonzero entries of one column of the prolongation matrix $P$. Existing multigrid solvers for diffusion problems on structured grids (e.g., \cite{ABDP81,Zee90,Den82}), infer the prolongation weights from local information. Following their approach, we construct our network to determine each column $j$ of $P$ from five $3 \times 3$ stencils. Specifically, the input to the network is composed of the stencil of the fine grid point coinciding with coarse point $j$, and the stencils of its four immediate neighbors, marked by the red circles in Fig.~\ref{fig:NetInput}.

For the output we note that the sparsity pattern imposed on $P$ implies that each column has at most nine non-zero elements, where  each non-zero element $P_{ij}$ is the prolongation weight of the coarse grid point $j$ to a nearby fine grid point $i$. Geometrically, this means that a coarse grid point contributes only to the fine-grid point with which it coincides (and the corresponding prolongation coefficient is set to 1) and to its eight fine-grid neighboring points, as illustrated in Fig.~\ref{fig:NetInput}. Only the four prolongation coefficients corresponding to the nearest neighbors are learned; the four remaining prolongation coefficients, marked by diagonal arrows in Fig.~\ref{fig:NetInput}, are then calculated  such that any grid function $u$ obtained by prolongation from the coarse grid satisfies $Au=0$ at these four grid points. The complete prolongation matrix $P$ is constructed by applying the same network repeatedly to all the coarse points.

The inference from local information maintains the efficiency of the resulting multigrid cycle, as the mapping has constant time computation per coarse grid point, and we construct $P$ by applying the network repeatedly to all coarse grid points. Moreover, the local nature of the inference allows application of the network on different grid-sizes. Further details are provided in Section \ref{Experiments}.

\subsection{Fourier analysis for efficient training}
The fact that the network determines $P$ locally does not mean that it suffices to train on very small grids. Because the method is to be used for large problems, it is critical that the subspace spanned by the columns of $P$ will approximate well all algebraically smooth errors of large problems, as discussed, e.g., in \cite{RDF06}. This implies that such errors should be encompassed in the loss function of the  training phase. In practice, our experiments show that good performance on large grids is already obtained after training only on a $32\times32$ grid, which is not very large but still results in an error propagation matrix $M$\ of size $1024\times1024$.

The main computational barrier of the loss \eqref{eq:loss} is due to the coarse-grid correction matrix $C$ \eqref{eq:C}, whose computation requires inversion of the matrix $P^T A P$ of size  $(n/2)^2 \times (n/2)^2$ elements. To overcome this prohibitive computation, we introduce two surrogates. First, we relax the spectral radius of the error propagation matrix with its squared Frobenious norm, relying on the fact that the Frobenious norm bounds the spectral radius from above, yielding a differentiable quantity without the need for (expensive) spectral decomposition. Secondly, we train on a  relatively limited class of discretization matrices,  $A$, which are called block-circulant matrices, allowing us to train efficiently on large problems, because it requires inversion only of small matrices, as explained below.
Due to   the local dependence of $P$ on $A$, we expect that the resulting trained network would be equally effective for general (non block-periodic) $A$, and this is indeed borne out in our experiments.

The block-periodic framework allows us to train efficiently on large problems. To do so, we exploit a block Fourier analysis technique that was recently introduced independently in several variants and for different applications \cite{BR18,BHM18,KRGO18}. Classical Fourier analysis has been employed for quantitative prediction of two-grid convergence factors since the 1970s. This technique, however, is exact only in very special cases of constant-coefficient operators and simple boundary conditions. Here, in contrast, we need to cater to arbitrary discrepancies in the values of the diffusion coefficients of neighboring grid cells, which imply strongly varying coefficients in the matrix $A$, so classical Fourier analysis is not appropriate.

To apply the new block Fourier analysis, we partition our $n \times n$ grid into equal-sized square blocks of $c \times c$  cells each, such that all the $\frac{n}{c} \times \frac{n}{c}$ blocks are identical with respect to their cell $g$ values, but within the block the $g$ values vary arbitrarily, according to the original distribution. This can be thought of as tiling the domain by identical blocks of $c \times c$ cells. Imposing periodic boundary conditions, we obtain a discretization matrix $A$ that is block-circulant. Furthermore, due to the dependence of $P$ on $A$, the matrix $M$ itself is similarly block-circulant and can be written as
\begin{equation} \label{eq:BlockCirculant}
M=\begin{bmatrix}
M_0 & M_1 & \ldots & M_{b-2} & M_{b-1} \\
M_{b-1} & M_0 & M_1 & \ldots & M_{b-2} \\
M_{b-2} & M_{b-1} & M_0 & \ldots & M_{b-3} \\
\ldots & \ldots & \ldots & \ldots & \ldots \\
M_1 & \ldots & M_{b-2} & M_{b-1} & M_0
\end{bmatrix} ,
\end{equation}
where $M_j, j=0, \ldots, b-1$, are $c^2 \times c^2$ blocks and $b = \frac{n^2}{c^2}$. This special structure has the following important implication. $M$ can easily be block-diagonalized in a way that each block of size $c^2 \times c^2$ on the diagonal has a simple closed form that depends on the elements of $A$ and a single parameter associated with a certain Fourier component. As a result, the squared Frobenius norm of the matrix $M$, which constitutes the loss for our network, can be decomposed into a sum of squared Frobenius norms of these small easily computed blocks, requiring only the inversion of relatively small matrices.

The theoretical foundation of this essential tool is summarized briefly below.  For further details, we refer the reader to the supplemental material and to \cite{BR18,BHM18}.

\paragraph{Block diagonalization of block circulant matrices}
Let the $n \times n$ matrix $K$ be block-circulant, with $b$ blocks of size $k$. That is, $n = bk$, and the elements of $K$ satisfy:
\begin{equation} \label{eq:BlockPeriodic}
K_{l,j} = K_{{\rm mod} (l-k,n), {\rm mod} (j-k,n)},
\end{equation}
with rows, column, blocks, etc., numbered starting from 0 for convenience. Here, we are adopting the MATLAB form mod$(x,y) =$  ``$x$ modulo $y$'', i.e., the remainder obtained when dividing integer $x$ by integer $y$. Below, we continue to use $l$ and $j$ to denote row and column numbers, respectively, and apply the decomposition:
\begin{equation} \label{eq:Decomposition}
l = l_0 + tk,~~~~j = j_0 + sk \, ,
\end{equation}
where $l_0 = {\rm mod}(l,k)$, $t = \lfloor \frac{l}{k} \rfloor$, $j_0 = {\rm mod}(j,k)$, $s = \lfloor \frac{j}{k} \rfloor$. Note that $ l,j \in \{0,...,n-1\}$; $l_0,j_0 \in \{0,...,k-1\}$; $t,s \in \{0,...,b-1\} \, .$

\noindent
Let the column vector
$$
v_m = \left[ 1,e^{i\frac{2\pi m}{n}}, \ldots, e^{i\frac{2\pi mj}{n}}, \ldots, e^{i\frac{2\pi m(n-1)}{n}} \right]^*
$$
denote the unnormalized $m$th Fourier component of dimension $n$, where $m=0,\ldots,n-1$. Finally, let $W$ denote the $n \times n$ matrix whose nonzero values are comprised of the elements of the first $b$ Fourier components as follows:
\begin{equation} \label{eq:W}
W_{l,j} = \frac{1}{\sqrt{b}} \delta_{l_0,j_0} v_s(l) \, ,
\end{equation}
where $v_s(l)$ denotes the $l$th element of $v_s$, and $\delta$ is the Kronecker delta. Then we have:
\begin{theorem}  \label{thm:1}
$W$ is a unitary matrix, and the similarity transformation ${\hat K} = W^* K W$ yields a block-diagonal matrix with $b$ blocks of size $k \times k$, ${\hat K} = \mathrm{blockdiag} \left( {\hat K}^{(0)}, ...,  {\hat K}^{(b-1)} \right)$.  Furthermore, if $K$ is band-limited modulo $n$ such that all the nonzero elements in the $l$th row of $K$, $l = 0, ..., n-1$, are included in $\{K_{l,{\rm mod}(l-\alpha,n)}, ..., K_{l,l}, ..., K_{l, {\rm mod}(l+\beta,n)} \}$, and $\beta + \alpha + 1 \leq k$, then the nonzero elements of the blocks are simply 
\begin{eqnarray*}
{\hat K}_{l_0, {\rm mod} (l_0+m,k)}^{(s)} & = & e^{-i \frac{2\pi sm}{n}} K_{l_0,{\rm mod}(l_0+m,n)} \, , \\
l_0 & = & 0, ..., k-1, ~~ m = -\alpha, ..., \beta \, .
\end{eqnarray*}
\end{theorem}
\textit{The proof is in the supplementary material.}

By applying  Theorem \ref{thm:1} recursively, we can block diagonalize $M$~\eqref{eq:BlockCirculant} for our 2D problems.  

In practice,  for computational efficiency, we perform an equivalent analysis   using Fourier symbols for each of the multigrid components as is commonly done in multigrid Fourier analysis  (see, e.g., \cite{wienands2004practical}).  We finally compute the loss
\[ \|M\|^2_{F} = \|\hat M\|^2_F = \sum_{s=0}^{b-1} \| \hat M^{(s)} \|^2_F, \]
where $\hat M=\mathrm{blockdiag}\left(\hat M^{(0)},...,\hat M^{(b-1)}\right)$. Note that, $\| \hat M^{(s)} \|^2_F$ is cheap to compute since $\hat M^{(s)}$ is of size $c^2 \times c^2$ ($c=8$ in our experiments).

To summarize, Theorem~\ref{thm:1} allows us to train on block- periodic\ problems with grid size of $n \times n$ using $\frac{n^2}{c^2}$ matrices of size $c^2 \times c^2$ instead of a matrix of size $n^2 \times n^2$.


\section{Experiments}
\label{Experiments}

For evaluating our algorithm several measures are employed, and we compare the performance of our network based solver to the classical and widely used Black Box multigrid scheme \cite{Den82}. To the best of our knowledge, this is the most efficient scheme for prolongation construction for diffusion problems. We train and test the solver for the diffusion coefficients $g$ sampled from a log-normal distribution, which is commonly assumed, e.g., in modeling flow in porous media (cf. \cite{MDH98}), where Black Box prolongation is used for homogenization in this regime). As explained above, the network is trained to minimize the Frobenious norm of the error propagation matrix of rather small grids comprised of circulant blocks and periodic boundary conditions. However, the tests are performed for a range of grid sizes, general non block-periodic $g$, Dirichlet boundary conditions, and even a different domain. Finally, we remark that the run-time per multigrid cycle of the network based algorithm is the same as that of Black Box multigrid, due to the identical sparsity pattern. However, the once-per-problem setup phase of the network based algorithm is more expensive than that of Black Box scheme because the former uses the trained network to determine $P$ whereas the latter uses explicit formulas.

\paragraph{Network details}
The inputs and outputs to our network are specified in Sec. \ref{Inferring_P}. We train a residual network consisting of $100$ fully-connected layers of width $100$ with RELU activations. Note that all matrix-dependent multigrid methods, including Black-Box, apply local nonlinear mappings to determine the prolongation coefficients.  

\paragraph{Handling the singularity}
Employing block Fourier analysis, as we do for efficiency, requires training with periodic boundary conditions. This means that our discretization matrices $A$ are singular, with null space comprised of the constant vector. This in turn means that $P^TAP$ is also singular and cannot be inverted, so $M$ cannot be explicitly computed. We overcome this problem by taking two measures. First, we impose that the sum of each row of $P$ be equal to 1. This ensures that the null space of the coarse-grid matrix $P^TAP$ too is comprised of the (coarse-grid) constant vector. Second, when computing the loss with the block Fourier analysis, we ignore the undefined block which corresponds to the zeroth Fourier mode (i.e., the constant vector). To force the rows of the prolongation to sum to one, we simply normalize the rows of $P$ that are learned by the network (left, right, above and below each coarse-grid point) before completing the construction of $P$ as described in Section~\ref{Inferring_P}.
When dealing with Dirichlet boundary conditions, this constraint is not feasible for rows corresponding to points near the boundary. For those points, we use the prolongation coefficients proposed by the Black Box algorithm.

\paragraph{Training details}
Training is performed in three stages. First, the network was trained for two epochs on $163840$ diffusion problems with grid-size $16 \times 16$ composed of $8\times 8$ doubly-periodic core blocks and with doubly periodic boundary conditions. This results in an tentative network, which is further trained as follows. The tentative network was used to create prolongation matrices for $163840$ non block-periodic diffusion problems with grid-size $16\times 16$ and periodic boundary conditions. Then, using Galerkin coarsening $P^TAP$, this resulted in $163840$ $8 \times 8$ blocks corresponding to coarse level blocks, which were used as core blocks for generating $16 \times 16$ block periodic problems.   Now, at the second stage, the new training set which consists of $2 \times 163840 $ problems, was used for additional two epochs. After that, at the last stage, those $8 \times 8$ core blocks were used to compose problems of grid-size $32 \times 32$, and the training continued for two additional epochs. The second stage was done to facilitate good performance on coarse grids as well, since in practice a two grid scheme is too expensive and recursive calls are made to solve the coarse grid equation. The network was initialized using the scheme suggested in \cite{zhang2018residual}. Throughout the training process, the optimizer used was Adam, with an initial learning rate drawn from $10^{-U([4,6])}$.

\subsection{Evaluation}

\paragraph{Spectral radius}
As a first evaluation, we present the spectral radius of the two-grid error propagation matrix obtained with our network on $64 \times 64$ grid problems with Dirichlet boundary conditions, where the diffusion coefficients were drawn from a log-normal distribution. Table~\ref{spectral_radius_table} shows the results, averaged over 100 instances. We observe that the network based algorithm clearly outperforms Black Box multigrid by this measure, achieving a lower average $\rho(M)$, despite the discrepancies between the training and testing conditions (block-periodic $g$, Frobenius norm minimization and smaller grid in the training, versus general $g$, Dirichlet boundary conditions, spectral radius and larger grid in the tests).

\begin{table}[t]
\caption{{\small Spectral radius of the two-grid error propagation matrix $M$ for a $64 \times 64$ grid with Dirichlet boundary conditions (smaller is better).}}
\label{spectral_radius_table}
\vskip 0.15in
\begin{center}
\begin{small}
\begin{sc}
\begin{tabular}{lcccr}
\toprule
Method & Spectral radius\\
\midrule
Black Box    & $0.1456 \pm 0.0170$\\
Network      & $0.1146 \pm 0.0168$\\

\bottomrule
\end{tabular}
\end{sc}
\end{small}
\end{center}
\vskip -0.1in
\end{table}

\paragraph{Multigrid cycles}
Numerical experiments are performed with V and W cycles. In each experiment, we test $100$ instances with Dirichlet boundary conditions, and the diffusion coefficients in each instance are drawn from a log-normal distribution. We solve the homogenous problem $Au=0$, with the initial guess for the solution drawn from a normal distribution\footnote{Due to the linearity of the problem and the algorithm, the convergence behavior is independent of $f$ and of the Dirichlet boundary values; we choose the homogeneous problem in order to allow us to run many cycles and reach the worst-case asymptotic regime without encountering roundoff errors when the absolute error is on the order of machine accuracy.}. In each experiment we run $40$ multigrid cycles and track the error norm reduction factor per cycle, $\frac{||e^{(k+1)}||_2}{||e^{(k)}||_2}$. We consider the ratio in the final iteration to be the asymptotic value.

Figure \ref{log_normal_w_cycle_1024} (left) shows the norm of the error as a function of the iteration number for a W cycle, where the fine grid-size is $1024 \times 1024$ and nine grids are employed in the recursive multigrid hierarchy. Both algorithms exhibit the expected fast multigrid convergence. Figure \ref{log_normal_w_cycle_1024} (right) shows the error reduction factor per iteration for this experiment. We see that the mean convergence rates increase with the number of iterations but virtually level off at asymptotic convergence factors of about $0.2$ for Black Box multigrid and about $0.16$ for the network-based method.

\begin{figure}
\centering
\includegraphics[width=0.5\columnwidth]{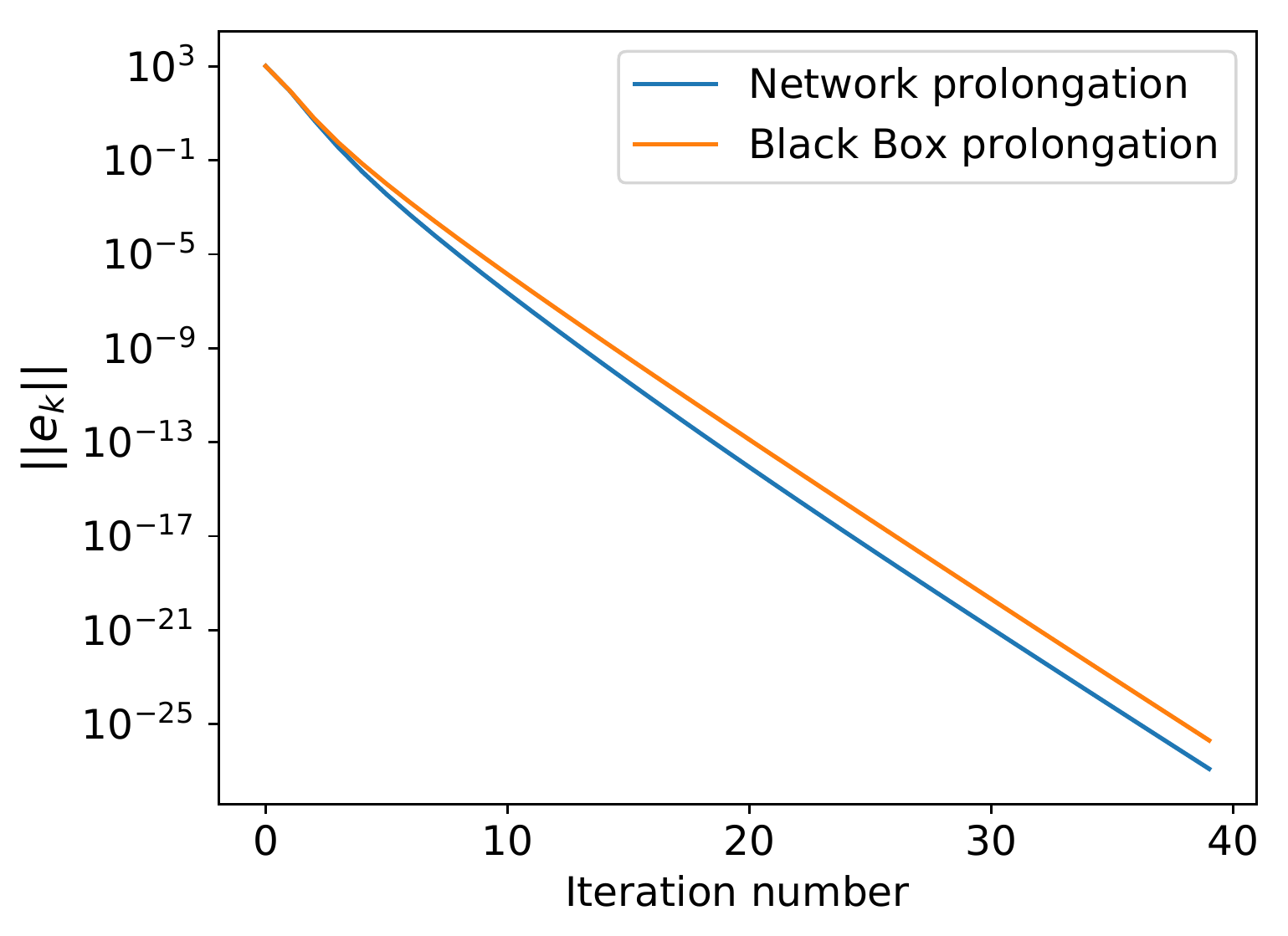}~~~~
\includegraphics[width=0.5\columnwidth]{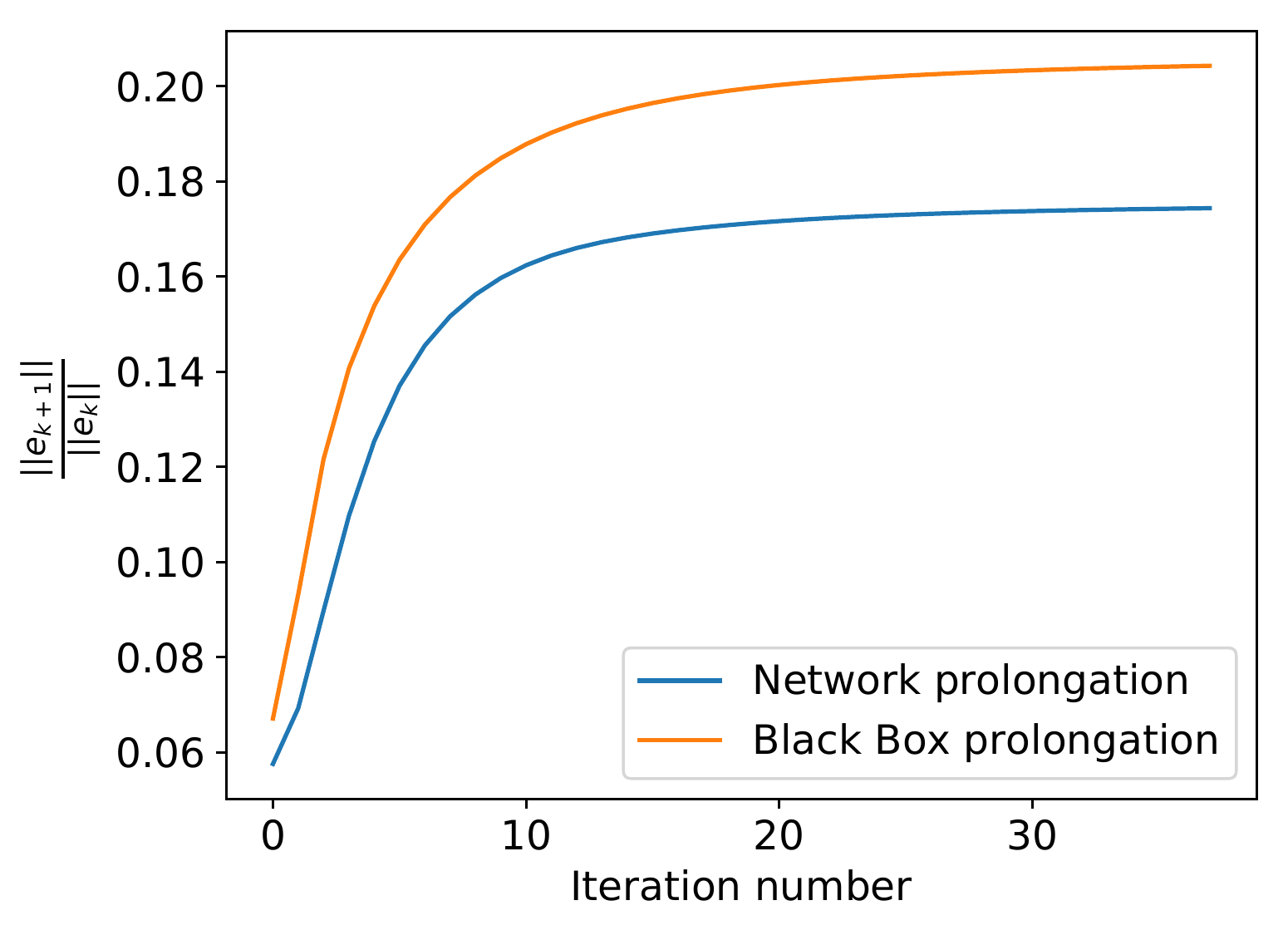}
\caption{{\small W-cycle performance, averaged over $100$ problems with grid size $1024\times 1024$ and Dirichlet Boundary conditions. Left: error norm as a function of iterations (W cycles). Right: error norm reduction factor per iteration.}}
\label{log_normal_w_cycle_1024}
\end{figure}

\begin{figure}[ht]
\begin{center}
\centerline{\includegraphics[width=0.6\columnwidth]{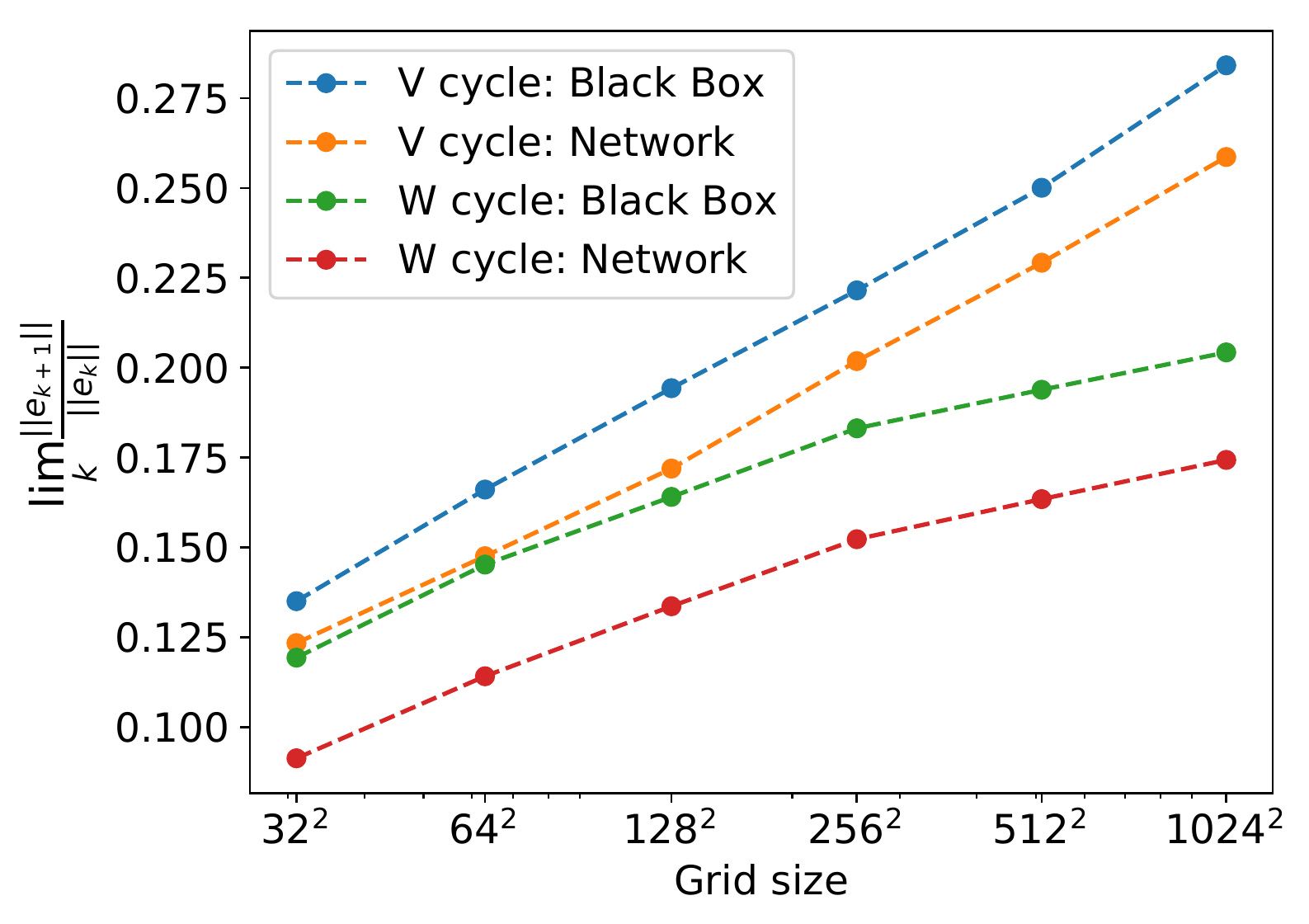}}
\caption{{\small V cycle and W cycle average asymptotic error norm reduction factor per iteration.}}
\label{log_normal_v_w_cycle}
\end{center}
\vskip -0.2in
\end{figure}

Figure \ref{log_normal_v_w_cycle} shows the asymptotic error norm convergence factors per cycle of V and W cycles with fine-grid sizes ranging from $32 \times 32$ to $1024 \times 1024$.
Additionally, Table \ref{success_table_log_normal} shows the success rate of the network based method, defined as the percentage of instances in which it outperformed the Black Box algorithm in terms of asymptotic convergence factor. Evidently, the network based method is superior by this measure, and we see no significant deterioration for larger grids, even though the training was performed on relatively small grids and with block-periodic $g$.
\begin{table}[t]
\caption{{\small Success rate of V cycle and W cycle with log-normal $g$ distribution.}}
\label{success_table_log_normal}
\vskip 0.15in
\begin{center}
\begin{small}
\begin{sc}
\begin{tabular}{ccccr}
\toprule
Grid size & V-cycle & W-cycle\\
\midrule
$32 \times 32$ & 83 \%& 100 \%\\
$64 \times 64$ & 92 \%& 100 \%\\
$128 \times 128$ & 91 \% & 100 \%\\
$256 \times 256$ & 84 \%& 99 \%\\
$512 \times 512$ & 81 \%& 99 \%\\
$1024 \times 1024$ & 83 \%& 98 \%\\
\bottomrule
\end{tabular}
\end{sc}
\end{small}
\end{center}
\vskip -0.1in
\end{table}

\paragraph{Uniform distribution}
As a test of robustness with respect to the diffusion coefficient distribution, we evaluate the network trained with log-normal distribution on a different distribution of the $g$ values. Here, we present the results of applying multigrid cycles as in the previous experiment, except that in these tests the diffusion coefficients are drawn from the uniform distribution over $[0,1]$. The results are shown in Figure \ref{uniform_v_w_cycle}, with Table \ref{success_table_uniform}, as before, showing the success rate of the network in these tests. Evidently, the advantage of the network based method is narrower in this case, due to the mismatch of distributions, but it still exhibits superior convergence factors.

\begin{figure}[ht]
\begin{center}
\centerline{\includegraphics[width=0.6\columnwidth]{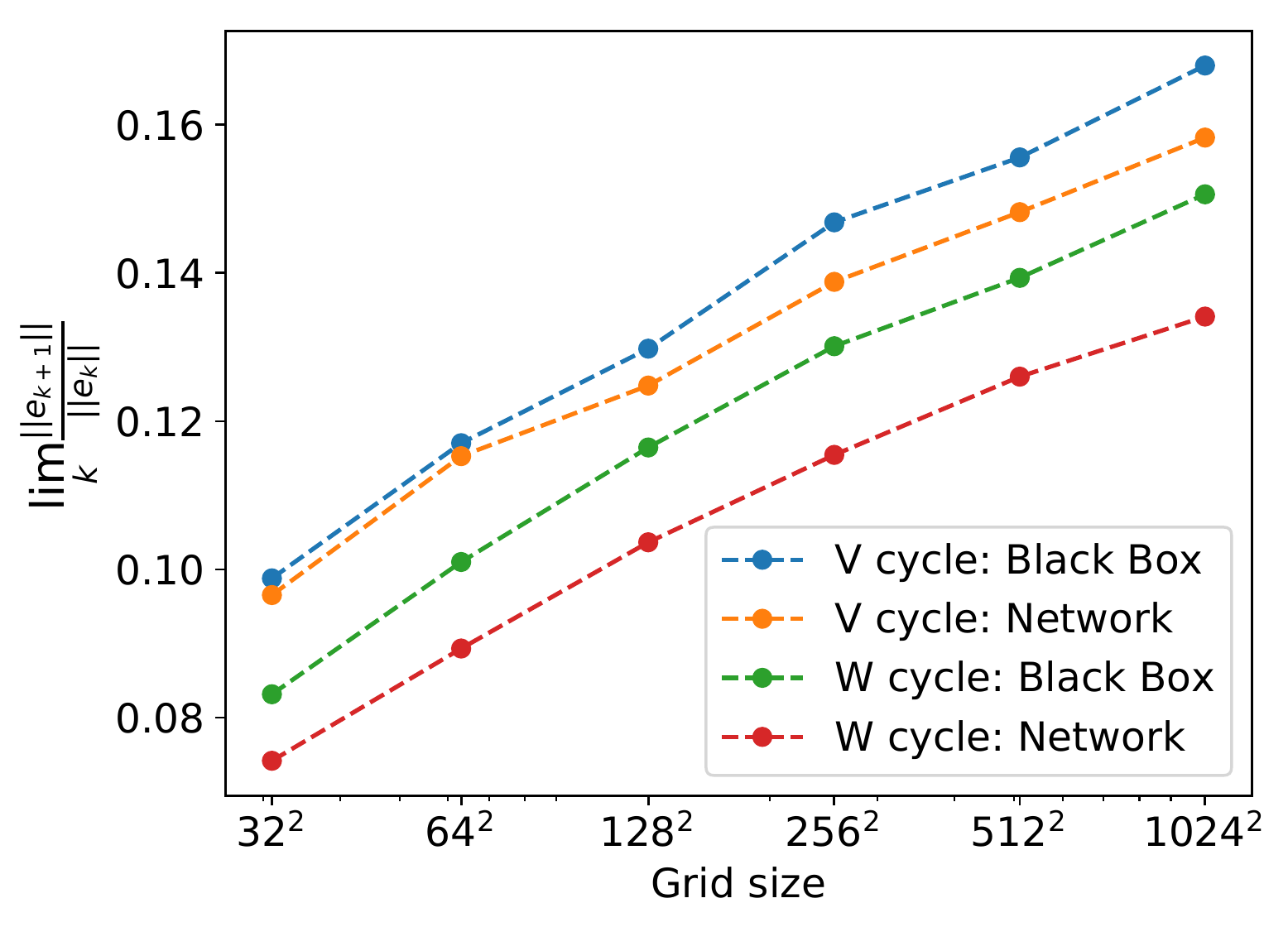}}
\caption{{\small V cycle and W cycle average asymptotic error norm reduction factor per iteration tested with uniform $g$ distribution, with network trained on log-normal distribution.}}
\label{uniform_v_w_cycle}
\end{center}
\vskip -0.2in
\end{figure}

\begin{table}[t]
\caption{{\small Success rate of V cycle and W cycle with uniform $g$ distribution.}}
\label{success_table_uniform}
\begin{center}
\begin{small}
\begin{sc}
\begin{tabular}{ccccr}
\toprule
Grid size & V-cycle & W-cycle\\
\midrule
$32 \times 32$ & 60 \%& 90 \%\\
$64 \times 64$ & 54 \%& 90 \%\\
$128 \times 128$ & 66 \%& 91 \% \\
$256 \times 256$ & 79 \%& 91 \%\\
$512 \times 512$ & 81 \%& 88 \%\\
$1024 \times 1024$ & 81 \%& 96 \%\\
\bottomrule
\end{tabular}
\end{sc}
\end{small}
\end{center}
\vskip -0.1in
\end{table}

\paragraph{Non-square domain}
In the next experiment, we test our network on diffusion problems specified on a domain consisting of a two-dimensional disk.
Our method achieves a better convergence rate in this case too, see Table \ref{tab:Irregular_domain}.
\begin{table}[t]
\caption{{\small Asymptotic error reduction factor per cycle on a $2D$ disk with a diameter of 64 grid points, averaged over 100 instances.}}
\label{Disk}
\begin{center}
\begin{small}
\begin{sc}
\begin{tabular}{lcccr}
\toprule
Method & V-cycle & W-cycle\\
\midrule
Black Box  & $0.1969 \pm 0.0290$ &$0.1639 \pm 0.0169$\\
Network    & $0.1868 \pm 0.0296$ &$0.1352 \pm 0.0155$\\

\bottomrule
\end{tabular}
\end{sc}
\end{small}
\end{center}
\vskip -0.1in
 \label{tab:Irregular_domain}
\end{table}

\paragraph{Diagonally dominant problems}
In the final experiment, we evaluate the algorithms for a variant of the problem where a positive constant $\varepsilon$ has been added to the diagonal, corresponding to the PDE \begin{equation}\label{diffusion_with_enhanced_diagonal}
-\nabla(g\cdot\nabla u)+\varepsilon u = f.
\end{equation}
This test is relevant, in particular, to time-dependent parabolic PDE, where the diagonal term stems from discretization of the time derivative. For this experiment, we trained a second network, following the same training procedure as before, where for the training instances we used $\varepsilon h^2=10^{-8}$.
Figure \ref{epsilon_v_w_cycle} indicates that the network based algorithm retains its advantage in those kind of problems also, and is able to perform well on different values of $\varepsilon h^2$.

\begin{figure}[ht]
\begin{center}
\centerline{\includegraphics[width=0.7\columnwidth]{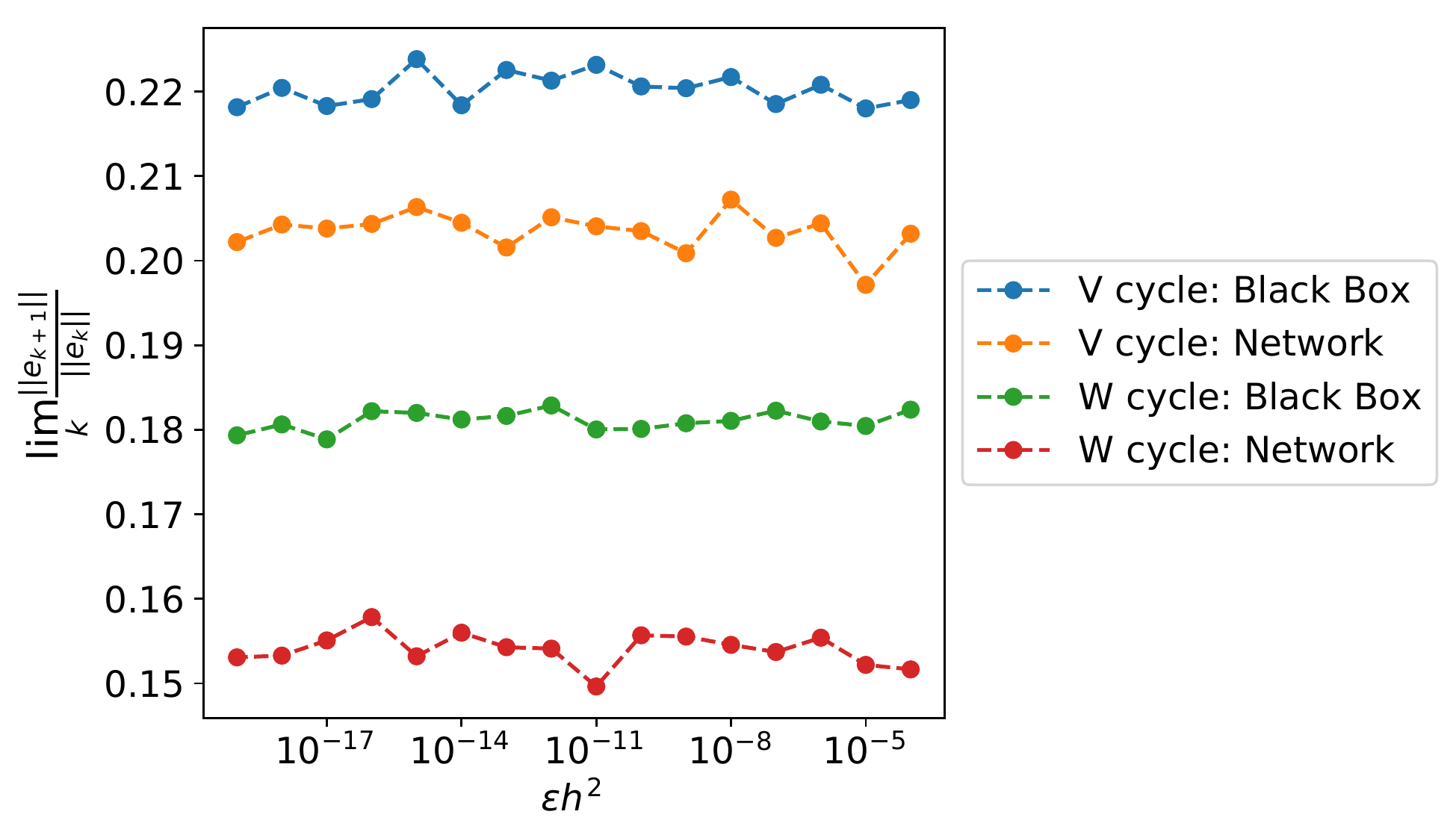}}
\caption{{\small Experiments with varying values of $\varepsilon h^{2}$ added to the diagonal. The graphs show the asymptotic error norm reduction factor of the V cycle and W cycles  per iteration, averaged over 100 experiments with grid size $256 \times 256$ ($h^2 = 1/65536$).}}
\label{epsilon_v_w_cycle}
\end{center}
\vskip -0.2in
\end{figure}

\section{Conclusion}
\label{Conclusion}
In this work we introduced a framework for devising multigrid solvers for parametric families of PDEs. Posed as a learning problem, this task is approached by learning a single mapping from discretization matrices to prolongation operators, using an efficient and unsupervised learning procedure. Experiments on 2D diffusion equations show improved convergence rates compared to the classical Black Box scheme, which has withstood the test of time for decades. Moreover, the experiments show generalization properties with respect to the problem size, boundary conditions and to some extent, its underlying distribution. Extending our work to triangulated and unstructured grids is an exciting direction we intend to pursue, as well as exploring simpler regression models which will allow for faster inference.

\bibliography{main}

\begin{thebibliography}{28}
\providecommand{\natexlab}[1]{#1}
\providecommand{\url}[1]{\texttt{#1}}
\expandafter\ifx\csname urlstyle\endcsname\relax
  \providecommand{\doi}[1]{doi: #1}\else
  \providecommand{\doi}{doi: \begingroup \urlstyle{rm}\Url}\fi

\bibitem[Alcouffe et~al.(1981)Alcouffe, Brandt, Dendy, and Painter]{ABDP81}
Alcouffe, R.~E., Brandt, A., Dendy, J.~E., and Painter, J.~W.
\newblock The multi-grid method for the diffusion equation with strongly
  discontinuous coefficients.
\newblock \emph{SIAM J. Sci. Stat. Comput.}, 2:\penalty0 430--454, 1981.

\bibitem[Baque et~al.(2018)Baque, Remelli, Fleuret, and Fua]{Fua18}
Baque, P., Remelli, E., Fleuret, F., and Fua, P.
\newblock Geodesic convolutional shape optimization.
\newblock \emph{arXiv:1802.04016 [cs.CE]}, 2018.

\bibitem[Baymani et~al.(2010)Baymani, Kerayechian, and
  Effati]{baymani2010artificial}
Baymani, M., Kerayechian, A., and Effati, S.
\newblock Artificial neural networks approach for solving stokes problem.
\newblock \emph{Applied Mathematics}, 1\penalty0 (04):\penalty0 288, 2010.

\bibitem[Berg \& Nystr{\"o}m(2018)Berg and Nystr{\"o}m]{Berg2018AUD}
Berg, J. and Nystr{\"o}m, K.
\newblock A unified deep artificial neural network approach to partial
  differential equations in complex geometries.
\newblock \emph{Neurocomputing}, 317:\penalty0 28--41, 2018.

\bibitem[Bolten \& Rittich(2018)Bolten and Rittich]{BR18}
Bolten, M. and Rittich, H.
\newblock {F}ourier analysis of periodic stencils in multigrid methods.
\newblock \emph{SIAM J. Sci. Comput.}, 40\penalty0 (3):\penalty0 A1642--A1668,
  2018.

\bibitem[Briggs et~al.(2000)Briggs, Henson, and McCormick]{BHM00}
Briggs, W.~L., Henson, V.~E., and McCormick, S.~F.
\newblock \emph{A multigrid tutorial}.
\newblock SIAM, second edition, 2000.

\bibitem[Brown et~al.(2018)Brown, He, and Maclachlan]{BHM18}
Brown, J., He, Y., and Maclachlan, S.
\newblock Local {F}ourier analysis of {BDDC}-like algorithms.
\newblock \emph{Submitted}, 2018.

\bibitem[Chang et~al.(2018)Chang, Meng, Haber, Tung, and
  Begert]{chang2018multilevel}
Chang, B., Meng, L., Haber, E., Tung, F., and Begert, D.
\newblock Multi-level residual networks from dynamical systems view.
\newblock In \emph{International Conference on Learning Representations}, 2018.

\bibitem[Chen et~al.(2019)Chen, Rubanova, Bettencourt, and
  Duvenaud]{Chen2019NeuralODE}
Chen, R., Rubanova, Y., Bettencourt, J., and Duvenaud, D.
\newblock Neural ordinary differential equations.
\newblock \emph{CoRR}, abs/1806.07366, 2019.

\bibitem[de~Zeeuw(1990)]{Zee90}
de~Zeeuw, P.~M.
\newblock Matrix-dependent prolongations and restrictions in a blackbox
  multigrid solver.
\newblock \emph{J. Comput. Appl. Math.}, 33:\penalty0 1--27, 1990.

\bibitem[{Dendy (Jr.)}(1982)]{Den82}
{Dendy (Jr.)}, J.~E.
\newblock Black box multigrid.
\newblock \emph{J. Comput. Phys.}, 48:\penalty0 366--386, 1982.

\bibitem[Falgout(2006)]{RDF06}
Falgout, R.~D.
\newblock An introduction to algebraic multigrid.
\newblock \emph{IEEE: Computing in Science and Engineering}, 8:\penalty0
  24--33, 2006.

\bibitem[Haber et~al.(2018)Haber, Ruthotto, Holtham, and
  Jun]{haber2018learning}
Haber, E., Ruthotto, L., Holtham, E., and Jun, S.-H.
\newblock Learning across scales---multiscale methods for convolution neural
  networks.
\newblock In \emph{Thirty-Second AAAI Conference on Artificial Intelligence},
  2018.

\bibitem[Han et~al.(2017)Han, Jentzen, and Weinan]{Han2017OvercomingTC}
Han, J., Jentzen, A., and Weinan, E.
\newblock Overcoming the curse of dimensionality: Solving high-dimensional
  partial differential equations using deep learning.
\newblock \emph{CoRR}, abs/1707.02568, 2017.

\bibitem[Han et~al.(2018)Han, Jentzen, and Weinan]{han2018solving}
Han, J., Jentzen, A., and Weinan, E.
\newblock Solving high-dimensional partial differential equations using deep
  learning.
\newblock \emph{Proceedings of the National Academy of Sciences}, 115\penalty0
  (34):\penalty0 8505--8510, 2018.

\bibitem[Hsieh et~al.(2019)Hsieh, Zhao, Eismann, Mirabella, and Ermon]{ICLR19}
Hsieh, J., Zhao, S., Eismann, S., Mirabella, L., and Ermon, S.
\newblock Learning neural {PDE} solvers with convergence guarantees.
\newblock \emph{ICLR}, 2019.

\bibitem[Katrutsa et~al.(2017)Katrutsa, Daulbaev, and Oseledets]{KDO17}
Katrutsa, A., Daulbaev, T., and Oseledets, I.
\newblock Deep multigrid: learning prolongation and restriction matrices.
\newblock \emph{arXiv:1711.03825v1 [math.NA]}, 2017.

\bibitem[Kumar et~al.(2018)Kumar, Rodrigo, Gaspar, and Oosterlee]{KRGO18}
Kumar, P., Rodrigo, C., Gaspar, F.~J., and Oosterlee, C.~W.
\newblock On cell-centered multigrid methods and local {F}ourier analysis for
  {PDE}s with random coefficients.
\newblock \emph{arXiv:1803.08864 [math.NA]}, 2018.

\bibitem[Lagaris et~al.(1998)Lagaris, Likas, and Fotiadis]{Lagaris98}
Lagaris, I.~E., Likas, A., and Fotiadis, D.~I.
\newblock Artificial neural networks for solving ordinary and partial
  differential equations.
\newblock \emph{IEEE transactions on neural networks}, 1998.

\bibitem[Mishra(2018)]{Mishra18}
Mishra, S.
\newblock A machine learning framework for data driven acceleration of
  computations of differential equations.
\newblock \emph{arXiv:1807.09519 [math.NA]}, 2018.

\bibitem[Moulton et~al.(1998)Moulton, Dendy, and Hyman]{MDH98}
Moulton, J.~D., Dendy, J.~E., and Hyman, J.~M.
\newblock The black box multigrid numerical homogenization algorithm.
\newblock \emph{J. Comput. Phys.}, 142\penalty0 (1):\penalty0 80--108, 1998.

\bibitem[Sirignano \& Spiliopoulos(2018)Sirignano and
  Spiliopoulos]{Sirignano18}
Sirignano, J. and Spiliopoulos, K.
\newblock Dgm: A deep learning algorithm for solving partial differential
  equations.
\newblock \emph{arXiv:1708.07469 [q-fin.MF]}, 2018.

\bibitem[Sun et~al.(2003)Sun, Yan, and Sclabassi]{sun2003solving}
Sun, M., Yan, X., and Sclabassi, R.~J.
\newblock Solving partial differential equations in real-time using artificial
  neural network signal processing as an alternative to finite-element
  analysis.
\newblock In \emph{Neural Networks and Signal Processing, 2003. Proceedings of
  the 2003 International Conference on}, volume~1, pp.\  381--384. IEEE, 2003.

\bibitem[Tang et~al.(2017)Tang, Shan, Dang, Li, Yang, Xu, and Wu]{Tang17}
Tang, W., Shan, T., Dang, X., Li, M., Yang, F., Xu, S., and Wu, J.
\newblock Study on a poisson’s equation solver based on deep learning
  technique.
\newblock \emph{EDAPS conference}, 2017.

\bibitem[Trottenberg et~al.(2001)Trottenberg, Oosterlee, and
  Sch\"{u}ller]{TOS01}
Trottenberg, U., Oosterlee, C., and Sch\"{u}ller, A.
\newblock \emph{Multigrid}.
\newblock Academic Press, London and San Diego, 2001.

\bibitem[Wei et~al.(2018)Wei, Jin, and Li]{Wei2018GeneralSF}
Wei, S., Jin, X., and Li, H.
\newblock General solutions for nonlinear differential equations: a deep
  reinforcement learning approach.
\newblock \emph{CoRR}, abs/1805.07297, 2018.

\bibitem[Wienands \& Joppich(2004)Wienands and Joppich]{wienands2004practical}
Wienands, R. and Joppich, W.
\newblock \emph{Practical Fourier analysis for multigrid methods}.
\newblock Chapman and Hall/CRC, 2004.

\bibitem[Zhang et~al.(2019)Zhang, Dauphin, and Ma]{zhang2018residual}
Zhang, H., Dauphin, Y.~N., and Ma, T.
\newblock Residual learning without normalization via better initialization.
\newblock In \emph{International Conference on Learning Representations}, 2019.

\end{thebibliography}
\bibliographystyle{icml2019}

\appendix
\section{Appendix: Block Fourier analysis}
\label{Appendix}
Below we prove Theorem 1. The proof is based on two supporting lemmas. We begin with some mathematical terms.

Consider the $n\times n$  block-circulant matrix of the following form, where $n=kb$  and all numbering of rows, columns, blocks, etc., starts from 0 for convenience\begin{equation*}
    A=\left(\begin{array}{c}
A^{\left(0\right)}\\
A^{\left(1\right)}\\
A^{\left(2\right)}\\
\vdots\\
A^{\left(b-1\right)}
\end{array}\right),
\end{equation*}
where the blocks $A^{\left(m\right)},m=0,\dots,b-1$ are $k\times n$ real or complex matrices whose elements satisfy
\begin{equation} \label{block_property}
    A^{\left(m\right)}_{l,j} = A_{l,\mod\left(j-k,n\right)}^{\left(m-1\right)}, \  m=1,\dots,b-1
\end{equation}
and hence $A_{l,j}=A_{\mod(l-k,n),\mod(j-k,n)}$. Here, we are adopting the MATLAB form mod$(x,y) =$  ``$x$ modulo $y$'', i.e., the remainder obtained when dividing integer $x$ by integer $y$. Below, we continue to use $l$ and $j$ to denote row and column numbers, respectively, and apply the decomposition:
\begin{equation} \label{eq:Decomposition}
l = l_0 + tk,~~~~j = j_0 + sk \, ,
\end{equation}
where $l_0 = {\rm mod}(l,k)$, $t = \lfloor \frac{l}{k} \rfloor$, $j_0 = {\rm mod}(j,k)$, $s = \lfloor \frac{j}{k} \rfloor$. Note that $ l,j \in \{0,...,n-1\}$; $l_0,j_0 \in \{0,...,k-1\}$; $t,s \in \{0,...,b-1\} \, .$

\noindent
Let the column vector
$$
v_m = \left[ 1,e^{i\frac{2\pi m}{n}}, \ldots, e^{i\frac{2\pi mj}{n}}, \ldots, e^{i\frac{2\pi m(n-1)}{n}} \right]^*
$$
denote the unnormalized $m$th Fourier component of dimension $n$, for $m=0,\ldots,n-1$. Let $W$ denote the $n \times n$ matrix whose nonzero values are comprised of the elements of the first $b$ Fourier components as follows:
\begin{equation} \label{eq:W}
W_{l,j} = 
\delta_{l_0,j_0} v_s(l) \, ,
\end{equation}
where $v_s(l)$ denotes the $l$th element of $v_s$, and $\delta$ is the Kronecker delta. An example for $W$, with $k = 3$ and $b = 4$, is given in Fig. \ref{fig:W}.
\setcounter{MaxMatrixCols}{20}
\begin{figure*}[t]
\begin{equation*}
W=
\begin{bmatrix}
1&0&0&1&0&0&1&0&0&1&0&0 \\
0&1&0&0&e^{-i\frac{2\pi}{12}}&0&0&e^{-i\frac{4\pi}{12}}&0&0&e^{-i\frac{6\pi}{12}}&0 \\
0&0&1&0&0&e^{-i\frac{4\pi}{12}}&0&0&e^{-i\frac{8\pi}{12}}&0&0&e^{-i\frac{12\pi}{12}} \\
1&0&0&e^{-i\frac{6\pi}{12}}&0&0&e^{-i\frac{12\pi}{12}}&0&0&e^{-i\frac{18\pi}{12}}&0&0 \\
0&1&0&0&e^{-i\frac{8\pi}{12}}&0&0&e^{-i\frac{16\pi}{12}}&0&0&e^{-i\frac{24\pi}{12}}&0 \\
0&0&1&0&0&e^{-i\frac{10\pi}{12}}&0&0&e^{-i\frac{20\pi}{12}}&0&0&e^{-i\frac{30\pi}{12}} \\
1&0&0&e^{-i\frac{12\pi}{12}}&0&0&e^{-i\frac{24\pi}{12}}&0&0&e^{-i\frac{36\pi}{12}}&0&0 \\
0&1&0&0&e^{-i\frac{14\pi}{12}}&0&0&e^{-i\frac{28\pi}{12}}&0&0&e^{-i\frac{42\pi}{12}}&0 \\
0&0&1&0&0&e^{-i\frac{16\pi}{12}}&0&0&e^{-i\frac{32\pi}{12}}&0&0&e^{-i\frac{48\pi}{12}} \\
1&0&0&e^{-i\frac{18\pi}{12}}&0&0&e^{-i\frac{36\pi}{12}}&0&0&e^{-i\frac{54\pi}{12}}&0&0 \\
0&1&0&0&e^{-i\frac{20\pi}{12}}&0&0&e^{-i\frac{40\pi}{12}}&0&0&e^{-i\frac{60\pi}{12}}&0 \\
0&0&1&0&0&e^{-i\frac{22\pi}{12}}&0&0&e^{-i\frac{44\pi}{12}}&0&0&e^{-i\frac{66\pi}{12}}  
\end{bmatrix}
\end{equation*}
\caption{An example for $W$ with $k = 3$ and $b = 4$.}
\label{fig:W}
\end{figure*}

\begin{lemma}
$\frac{1}{\sqrt{b}}W$ is a unitary matrix.
\end{lemma}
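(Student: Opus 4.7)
The plan is to prove the equivalent statement $W^{*}W = bI$. Starting from the definition, I read off $W_{l,j} = \delta_{l_0,j_0}\, e^{-i 2\pi s l/n}$ (using the conjugate-transpose in the definition of $v_m$). The natural first move is to compute a generic entry of the Gram matrix,
$$
(W^{*}W)_{j,j'} \;=\; \sum_{l=0}^{n-1} \overline{W_{l,j}}\, W_{l,j'} \;=\; \sum_{l=0}^{n-1} \delta_{l_0,j_0}\,\delta_{l_0,j'_0}\, e^{\,i 2\pi (s-s')\,l/n}\, .
$$

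Next I would exploit the two Kronecker deltas: unless $j_0 = j'_0$, every term vanishes, so the off-diagonal-mod-$k$ entries of $W^{*}W$ are already zero. Assuming $j_0 = j'_0 =: l_0$, the surviving values of $l$ are exactly $l = l_0 + tk$ for $t = 0,\ldots,b-1$, using the decomposition~\eqref{eq:Decomposition}. Substituting and pulling the $l_0$-dependent factor out of the sum turns the expression into
$$
e^{\,i 2\pi (s-s')\, l_0/n} \sum_{t=0}^{b-1} e^{\,i 2\pi (s-s')\, t/b}\, ,
$$
since $k/n = 1/b$.

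The key step is then recognizing the inner sum as a standard geometric sum of $b$th roots of unity: because $s,s' \in \{0,\ldots,b-1\}$, the difference $s-s'$ lies in $\{-(b-1),\ldots,b-1\}$, so the sum equals $b$ when $s=s'$ and vanishes otherwise. In the nonzero case $s=s'$, the prefactor $e^{i 2\pi(s-s')l_0/n}$ is also $1$. Combining with the earlier reduction $j_0 = j'_0$, this gives $(W^{*}W)_{j,j'} = b\,\delta_{j,j'}$, hence $W^{*}W = bI$ and $\tfrac{1}{\sqrt{b}}W$ is unitary.

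I do not anticipate a real obstacle here; the argument is essentially the orthogonality of discrete Fourier components restricted to each residue class mod $k$. The only care needed is bookkeeping: correctly reading off the conjugate in the definition of $v_m$, and using~\eqref{eq:Decomposition} to see that fixing $l_0$ reduces the sum over $l$ to a sum over $t$ with $k/n = 1/b$, which is what converts the $n$th-root-of-unity exponential into a $b$th-root-of-unity sum.
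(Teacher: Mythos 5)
Your proof is correct and follows essentially the same route as the paper's: compute the column inner products, use the Kronecker deltas to reduce to columns in the same residue class mod $k$, and finish with the orthogonality of the $b$th roots of unity via a geometric sum. The only cosmetic difference is that you handle the diagonal case $s=s'$ and the off-diagonal case uniformly through the same sum, whereas the paper treats $j=m$ separately by direct inspection.
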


\begin{proof}
Let $W_j$ and $W_m$ denote the $j$th and $m$th columns of $W$. Consider the inner product $W^\star_jW_m=\sum_{q=0}^{n-1}W^\star_j(q)W_m(q)$. For $\mod(j-m,k)\ne 0$, the product evidently vanishes because in each term of the sum at least one of the factors is zero. For $j=m$, the terms where $\mod(q,k)=j_0$ are equal to 1, while the rest are equal to zero, and therefore the product is $b$. Finally, for $j\ne m$ but $\mod(j-m,k)=0$, we can write $m=j+rk$ for some integer $r$ s.t. $0<|r|<b$. Summing up the non-zero terms, we obtain:
\begin{equation*}
\begin{split}
&\sum_{p=0}^{n-1}W_{j}^{\star}\left(p\right)W_{m}\left(p\right) =\sum_{q=0}^{b-1}v_{s}^{\star}\left(j_{0}+qk\right)v_{s+r}\left(j_{0}+qk\right)
\\ &=\sum_{q=0}^{b-1}e^{-i\frac{2\pi r\left(j_{0}+qk\right)}{n}}
=e^{-i\frac{2\pi rj_{0}}{n}}\sum_{q=0}^{b-1}\left(e^{-i\frac{2\pi r}{b}}\right)^{q}
\\ &=e^{-i\frac{2\pi rj_{0}}{n}}\frac{1-e^{-i\frac{2\pi rb}{b}}}{1-e^{-i\frac{2\pi r}{b}}}=0.
\end{split}
\end{equation*}
We conclude that $\frac{1}{\sqrt{b}}W^\star \frac{1}{\sqrt{b}}W=I_n$, the $n\times n$ identity matrix.
\end{proof}

\begin{lemma}
The similarity transformation, $\hat{A}=\frac{1}{\sqrt{b}}W^\star A \frac{1}{\sqrt{b}}W$, yields a block-diagonal matrix $\hat{A}$ with $b$ blocks of size $k\times k$.
\end{lemma}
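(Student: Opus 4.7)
The plan is to compute $\hat{A}_{l,j}$ directly from the definitions and show that this entry vanishes whenever $s := \lfloor j/k \rfloor$ and $s' := \lfloor l/k \rfloor$ differ, leaving exactly $b$ diagonal blocks of size $k \times k$. I begin with
$$\hat{A}_{l,j} = \frac{1}{b}\sum_{p,q=0}^{n-1} W^\star_{l,p}\, A_{p,q}\, W_{q,j},$$
and use the Kronecker deltas in (\ref{eq:W}) to restrict the double sum to pairs $(p,q)$ with $p_0 = l_0$ and $q_0 = j_0$. Writing $p = l_0 + tk$ and $q = j_0 + uk$ via the decomposition (\ref{eq:Decomposition}), the $W$-factors contribute clean exponentials $e^{i 2\pi s' p / n}$ and $e^{-i 2\pi s q / n}$, each of which splits into a ``base'' term in $l_0$ or $j_0$ and a ``block'' term in $t$ or $u$ of the form $e^{\pm i 2\pi \,\cdot\,/b}$.

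Next, I invoke the block-circulant shift invariance from (\ref{block_property}), in its compact form $A_{l,j} = A_{\mathrm{mod}(l-k,n),\,\mathrm{mod}(j-k,n)}$. Iterating this $t$ times yields
$$A_{l_0 + tk,\, j_0 + uk} = A_{l_0,\, j_0 + wk}, \qquad w := \mathrm{mod}(u - t,\, b),$$
so the $t$-dependence of the $A$-factor disappears entirely. For each fixed $t$, the map $u \mapsto w$ is a bijection on $\{0,\ldots,b-1\}$, and using $b$-periodicity of $e^{-i 2\pi s u/b}$ to rewrite $u \equiv t + w \pmod b$ introduces an extra factor $e^{-i 2\pi s t/b}$ that combines with the $e^{i 2\pi s' t/b}$ coming from $W^\star$. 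The double sum thereby factorizes into an outer geometric sum $\sum_{t=0}^{b-1} e^{i 2\pi (s' - s) t / b}$ times an inner sum over $w$ that depends only on $l_0, j_0, s$.

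The outer geometric sum equals $b$ when $s = s'$ and $0$ otherwise, so $\hat{A}_{l,j}$ vanishes whenever $\lfloor l/k \rfloor \neq \lfloor j/k \rfloor$; the nonzero entries organize into $b$ blocks indexed by $s \in \{0,\ldots,b-1\}$, each of size $k \times k$ with rows and columns indexed by $l_0, j_0 \in \{0,\ldots,k-1\}$. The main obstacle is purely the reindexing bookkeeping: iterating the shift identity $t$ times while tracking the mod-$n$ wraparound on both axes, and recognizing that the substitution $u \mapsto w$ together with the $b$-periodicity of the exponentials is what decouples the $t$-dependence of the $A$-entries from their $W$-weights. Once this separation is achieved, the collapse to a Kronecker $\delta_{s,s'}$ via the geometric sum is immediate and the claim follows.
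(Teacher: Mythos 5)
Your proposal is correct and follows essentially the same route as the paper's proof: both exploit the sparsity pattern of $W$ to restrict the sums to a single residue class mod $k$, invoke the block-circulant shift property (\ref{block_property}) to reduce the $A$-entry to one depending only on $\mathrm{mod}(u-t,b)$, and conclude via the geometric sum $\sum_{t}e^{i2\pi(s'-s)t/b}=b\,\delta_{s,s'}$. The only difference is organizational — you evaluate the full double sum for $\hat{A}_{l,j}$ at once, whereas the paper first establishes $A^{l}W_j=e^{-i\frac{2\pi st}{b}}A^{l_0}W_j$ and then contracts with $W_l^\star$ — and this does not change the substance of the argument.
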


\begin{proof}
Denote the $l$th row of $A$ by $A^l$. Then, the product $A^lW_j$ reads
\begin{equation*}
\begin{split}
    &A^{l}W_{j}=\sum_{p=0}^{n-1}A^{l}\left(p\right)W_{j}\left(p\right)=\sum_{q=0}^{b-1}A^{l}\left(j_{0}+qk\right)v_{s}\left(j_{0}+qk\right)\\
    &=\sum_{q=0}^{b-1}A^{l_{0}+tk}\left(j_{0}+qk\right)e^{-is\frac{2\pi\left(j_{0}+qk\right)}{n}}.
\end{split}
\end{equation*}
By repeated use of (\ref{block_property}), this yields
\begin{equation*}
\begin{split}
&A^{l}W_{j}=\sum_{q=0}^{b-1}A^{l_{0}}\left(j_{0}+\mathrm{mod}\left(q-t,b\right)k\right)e^{-is\frac{2\pi\left(j_{0}+qk\right)}{n}} \\
&=e^{-is\frac{2\pi tk}{n}}\sum_{q=0}^{b-1}A^{l_{0}}\left(j_{0}+\mathrm{mod}\left(q-t,b\right)k\right)e^{-is\frac{2\pi\left(j_{0}+\left(q-t\right)k\right)}{n}} \\
&=e^{-is\frac{2\pi t}{b}}\sum_{q=0}^{b-1}A^{l_{0}}\left(j_{0}+\mathrm{mod}\left(q-t,b\right)k\right)e^{-is\frac{2\pi\left(j_{0}+\mathrm{mod}\left(q-t,b\right)k\right)}{n}} \\
&=e^{-is\frac{2\pi t}{b}}A^{l_{0}}W_{j}.
\end{split}
\end{equation*}
Denoting $u_j=A^{(0)}W_j$, we thus obtain
\begin{equation*}
    AW_{j}=\left(\begin{array}{c}
u_{j}\\
e^{-i\frac{2\pi s}{b}}u_{j}\\
e^{-2i\frac{2\pi s}{b}}u_{j}\\
\vdots\\
e^{-\left(b-1\right)i\frac{2\pi s}{b}}u_{j}
\end{array}\right),
\end{equation*}
where the $q$th element of $u_j$, $q=0,\dots, k-1$, is given by
\begin{equation*}
    u_j(q)=A^qWj=\sum_{q=0}^{b-1}A^{q}\left(j_{0}+qk\right)v_{s}\left(j_{0}+qk\right).
\end{equation*}
Multiplying on the left by $W_l^\star$ for any $l=l_0+tk$ with $l_0 \in \{0,\dots,k-1\}$ and $t\in \{0,\dots,b-1\}$, yields
\begin{equation*}
\begin{split}
&W_{l}^{\star}AW_{j}    =\sum_{q=0}^{b-1}W_{l}^{\star}\left(l_{0}+qk\right)e^{-qi\frac{2\pi s}{b}}u_{j}\left(l_{0}\right) \\
&=\sum_{q=0}^{b-1}v_{t}^{\star}\left(l_{0}+qk\right)e^{-qi\frac{2\pi s}{b}}u_{j}\left(l_{0}\right) \\
&=\sum_{q=0}^{b-1}e^{i\frac{2\pi t\left(l_{0}+qk\right)}{n}}e^{-qi\frac{2\pi s}{b}}u_{j}\left(l_{0}\right) \\
&=e^{i\frac{2\pi tl_{0}}{n}}u_{j}\left(l_{0}\right)\sum_{q=0}^{b-1}e^{qi\frac{2\pi\left(t-s\right)}{b}}.
\end{split}
\end{equation*}
For $t\ne s$, the final sum yields
\begin{equation*}
    \sum_{q=0}^{b-1}e^{qi\frac{2\pi\left(t-s\right)}{b}}=\sum_{q=0}^{b-1}\left(e^{qi\frac{2\pi\left(t-s\right)}{b}}\right)^{q}=\frac{1-e^{i2\pi\left(t-s\right)}}{1-e^{i\frac{2\pi\left(t-s\right)}{b}}}=0.
\end{equation*}
We conclude that $W_l^\star A W_j$ vanishes unless $t=s$, which implies the block-periodic form stated in the proposition.
\end{proof}
For $t=s$, all the terms in the final sum in the proof are equal to 1, and therefore the sum is equal to $b$. This yields the following.
\begin{theorem}
Let $W$ be the matrix defined in (\ref{eq:W}). Then, $\hat{A}=\frac{1}{\sqrt{b}}W^\star A \frac{1}{\sqrt{b}}W=\mathrm{blockdiag}(B^{(0)},\dots,B^{(b-1)})$, where the elements of the $k\times k$ blocks $B^{(s)}$, $s=0,\dots,b-1,$ are given by
\begin{equation*}
\begin{split}
B_{l_{0},j_{0}}^{\left(s\right)}&=e^{i\frac{2\pi sl_{0}}{n}}u_{j}\left(l_{0}\right) \\
&=e^{i\frac{2\pi sl_{0}}{n}}\sum_{q=0}^{b-1}A^{l_{0}}\left(j_{0}+qk\right)v_{s}\left(j_{0}+qk\right) \\
&=e^{i\frac{2\pi sl_{0}}{n}}\sum_{q=0}^{b-1}A^{l_{0}}\left(j_{0}+qk\right)e^{-i\frac{2\pi s\left(j_{0}+qk\right)}{n}}\\
&=e^{i\frac{2\pi s\left(l_{0}-j_{0}\right)}{n}}\sum_{q=0}^{b-1}A^{l_{0}}\left(j_{0}+qk\right)e^{-i\frac{2\pi sq}{b}}.
\end{split}
\end{equation*}
\end{theorem}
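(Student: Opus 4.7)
The plan is to assemble the theorem directly from the two preceding lemmas, so there is essentially no new content to prove: the structural claim is already in Lemma 2, and the entry formulas are obtained by reading off what the proof of Lemma 2 leaves ``just after'' the $t=s$ case. Lemma 1 legitimizes the use of $\frac{1}{\sqrt{b}} W^\star$ as the inverse of $\frac{1}{\sqrt{b}} W$, so that $\hat{A}$ is indeed a similarity transform, and Lemma 2 gives the block-diagonal structure with $b$ blocks of size $k \times k$, indexed by $s = 0,\dots,b-1$.

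The main step is then to handle the surviving case $t=s$ left open at the end of the proof of Lemma 2. There the quantity $W_l^\star A W_j$ was reduced to $e^{i\frac{2\pi t l_0}{n}} u_j(l_0) \sum_{q=0}^{b-1} e^{q i \frac{2\pi(t-s)}{b}}$; setting $t=s$ makes every summand equal to $1$, so the sum equals $b$. After dividing by the two $\frac{1}{\sqrt{b}}$ factors, the $b$ cancels and one obtains $B^{(s)}_{l_0,j_0} = e^{i\frac{2\pi s l_0}{n}} u_j(l_0)$, which is the first of the claimed identities.

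The remaining lines are algebraic simplifications. I would substitute the expression for $u_j(l_0) = A^{l_0} W_j = \sum_{q=0}^{b-1} A^{l_0}(j_0+qk) v_s(j_0+qk)$ taken from the middle of Lemma~2's proof, then expand $v_s(j_0+qk) = e^{-i \frac{2\pi s (j_0+qk)}{n}}$ using the definition of the Fourier components, and finally pull the factor $e^{-i \frac{2\pi s j_0}{n}}$ out of the sum and combine it with the external phase to obtain $e^{i\frac{2\pi s (l_0 - j_0)}{n}}$. Using $n = bk$ converts $e^{-i\frac{2\pi s q k}{n}}$ into $e^{-i\frac{2\pi s q}{b}}$, giving the last equality.

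The hard part is really not analytic; it is only bookkeeping of the indices. The subtle points to watch are (i) that the row index $l$ of $\hat{A}$ decomposes as $l = l_0 + tk$, and the non-vanishing condition $t=s$ correctly pins $t$ to the block index $s$ while leaving $l_0$ free as the within-block row coordinate; (ii) that $u_j$ depends on $j$ only through $s$ and $j_0$, which is what makes the block element depend only on $(l_0,j_0)$ and $s$ as claimed; and (iii) that the factor $A^{l_0}$ appearing in $u_j(l_0)$ is the $l_0$th row of $A^{(0)}$, which is consistent with the block-circulant relation (\ref{block_property}) used to shift from $A^{l_0+tk}$ down to $A^{l_0}$ in the proof of Lemma~2.
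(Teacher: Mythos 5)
Your proposal is correct and follows essentially the same route as the paper: the theorem is obtained by combining Lemma 1 (unitarity) and Lemma 2 (block-diagonal structure), evaluating the final sum of Lemma 2's proof at $t=s$ to get $b$, cancelling it against the $\frac{1}{b}$ normalization, and then expanding $u_j(l_0)$ via the definition of $v_s$. Your index bookkeeping is sound (and in fact cleaner than the paper's displayed formula for $u_j(q)$, which reuses $q$ as both the argument and the summation index).
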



\begin{remark}
The block Fourier analysis is applicable to discretized partial differential equations of any dimension $d$  by recursion. That is, for $d > 1$ the blocks of $A$ are themselves block-circulant, and so on. Remark 1 also generalizes to any dimension. That is, if the diameter of the discretization stencil is at most $k$ then each element of $B$ is easily computed from a single element of $A$.
\end{remark}
\end{document}